\newcommand{\Spec}{\operatorname{Spec}}
\renewcommand{\phi}{\varphi}
\newcommand{\Ass}{\operatorname{Ass}}
\newcommand{\depth}{\operatorname{depth}}
\newcommand{\Max}{\operatorname{Max}}
\newcommand{\Min}{\operatorname{Min}}
\newcommand{\Ann}{\operatorname{Ann}}
\newcommand{\Hom}{\operatorname{Hom}}
\newcommand{\Supp}{\operatorname{Supp}}
\newcommand{\Fin}{\operatorname{Fin}}
\newtheorem{proposition}{Proposition}[section]
\newtheorem{lemma}[proposition]{Lemma}
\newtheorem{corollary}[proposition]{Corollary}
\newtheorem{theorem}[proposition]{Theorem}
\theoremstyle{definition}
\newtheorem{definition}[proposition]{Definition}
\newtheorem{remark}[proposition]{Remark}
\patchcmd{\@settitle}{\uppercasenonmath\@title}{}{}{}
\patchcmd{\@setauthors}{\MakeUppercase}{}{}{}
\begin{document}

\title[Cardinality of groups and rings]{Cardinality of groups and rings via the idempotency of infinite cardinals}

\author[A. Tarizadeh]{Abolfazl Tarizadeh}
\address{Department of Mathematics, Faculty of Basic Sciences, University of Maragheh, Maragheh, East Azerbaijan Province, Iran.}
\email{ebulfez1978@gmail.com}

\date{}
\subjclass[2010]{03E10, 03E25, 03E75, 13B30, 13A15, 20E34}
\keywords{The idempotency of infinite cardinals; Balanced ring}

\begin{abstract}  An important classical result in ZFC asserts that every infinite cardinal number is idempotent. Using this fact, we obtain several algebraic results in this article. The first result  asserts that an infinite Abelian group has a proper subgroup with the same cardinality if and only if it is not a Prüfer group. In the second result, the cardinality of any monoid-ring $R[M]$ (not necessarily commutative) is calculated. In particular, the cardinality of every polynomial ring with any number of variables (possibly infinite) is easily computed. Next, it is shown that every commutative ring and its total ring of fractions have the same cardinality. This set-theoretic observation leads us to a notion in ring theory that we call  a balanced ring (i.e. a ring that is canonically isomorphic to its total ring of fractions). Every zero-dimensional ring is a balanced ring. Then we show that a Noetherian ring is a balanced ring if and only if its localization at every maximal ideal has zero depth. It is also proved that every self-injective ring (injective as a module over itself) is a balanced ring.   
\end{abstract}

\maketitle

\section{Introduction}

In Zermelo-Fraenkel (ZF) set theory, the axiom of choice is equivalent to the statement that every infinite cardinal number is idempotent. The fact that the choice yields the idempotency of infinite cardinals is quite well known (see e.g. \cite[Lemma 6R]{Enderton} or \cite[Theorem 8.7]{Math Garden}). The reverse implication is less known in the literature and was proved by Alfred Tarski (published in 1924). It is worth mentioning that Tarski proved the above result earlier than that date. But two famous mathematicians (Fréchet and Lebesgue) who were probably the reviewers of Tarski's article by giving strange reasons did not allow him to publish his result in Comptes rendus de l'Académie des Sciences. Fréchet wrote that an implication between two well known propositions is not a new result. Lebesgue wrote that an implication between two false propositions is of no interest. Tarski then published his result in Fundamenta Mathematicae \cite{Tarski}. Note that the idempotency of the least infinite cardinal $\aleph_{0}$ (the cardinality of natural numbers) can be proved in ZF without using the axiom of choice. This interesting result was proved by Georg Cantor in the last years of 19th century (in fact, it was the primary motivation for discovering the idempotency of all infinite cardinals). Indeed, if $\omega=\{0,1,2, \ldots\}$ denotes the set of natural numbers, then it can be seen that the map $\omega\times\omega\rightarrow\omega$ given by $(m,n)\mapsto m+\sum\limits_{k=1}^{m+n}k=m+\frac{(m+n)(m+n+1)}{2}$ is injective (even bijective) and hence $\aleph_{0}=\aleph_{0}^{2}$. On the other hand, in 1949, Szélpál published a short two-page article \cite{Szelpal}. The only result of this article was an interesting theorem in group theory which asserts that  the Prüfer groups are the only infinite Abelian groups with only finite proper subgroups. Next in 1986, Gilmer and Heinzer \cite[Theorem 1.2]{Gilmer-Heinzer} proved a technical result in the realm of commutative algebra which asserts that every uncountable commutative ring has a proper subring with the same cardinality. In this article, by applying these two major results and the idempotency of infinite cardinals, we obtain the following characterization: 

\begin{theorem} An infinite Abelian group has a proper subgroup with the same cardinality if and only if it is not a Prüfer group. 
\end{theorem}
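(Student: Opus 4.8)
The plan is to treat the two implications separately, splitting the harder one into the countable and the uncountable case. For the ``only if'' direction: if $G$ is a Prüfer group $\mathbb{Z}(p^{\infty})$, then $G$ is countably infinite and every proper subgroup of $G$ is one of the finite cyclic groups $\mathbb{Z}/p^{n}\mathbb{Z}$, so no proper subgroup has cardinality $\aleph_{0}=|G|$; contrapositively, if an infinite Abelian group has a proper subgroup of its own cardinality it cannot be a Prüfer group. For the ``if'' direction when $|G|=\aleph_{0}$: since $G$ is not a Prüfer group, Szélpál's theorem (in contrapositive form) supplies an infinite proper subgroup $H\subsetneq G$, and then $|H|=\aleph_{0}=|G|$ because an infinite subset of a countable set is countable.

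The substantive case is $|G|=\kappa>\aleph_{0}$ (such a $G$ being automatically not a Prüfer group). The idea is to pass from the group to a ring so that the Gilmer--Heinzer theorem applies. Consider the idealization (trivial ring extension) $R=\mathbb{Z}\ltimes G$: its underlying set is $\mathbb{Z}\times G$ with componentwise addition and with multiplication $(a,x)(b,y)=(ab,\,ay+bx)$. A routine check shows that $R$ is a commutative ring with identity $(1,0)$, and, by the idempotency of infinite cardinals, $|R|=|\mathbb{Z}|\cdot|G|=\aleph_{0}\cdot\kappa=\kappa$. By \cite[Theorem 1.2]{Gilmer-Heinzer}, $R$ then has a proper subring $S$ with $|S|=\kappa$. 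Since $S$ contains the identity $(1,0)$, it contains $\mathbb{Z}\ltimes 0$; hence for each $(a,x)\in S$ we have $(0,x)=(a,x)-(a,0)\in S$, so $S=\mathbb{Z}\ltimes H$, where $H=\{x\in G:(0,x)\in S\}$ is a subgroup of $G$. Now $S\subsetneq R$ forces $H\subsetneq G$, and $\kappa=|S|=\aleph_{0}\cdot|H|$ forces $|H|=\kappa$ (again using idempotency). Thus $H$ is a proper subgroup of $G$ with $|H|=|G|$, completing the proof.

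I expect the main obstacle to be precisely this uncountable case: crude attempts --- deleting a single element of $G$, or a single summand from a direct-sum decomposition --- do not in general yield a proper subgroup of full cardinality; for instance, in $G=\bigoplus_{\kappa}\mathbb{Z}/2\mathbb{Z}$ every element already lies in the subgroup generated by the remaining ones. So some genuinely new input is needed, and the key point that makes the ring-theoretic input usable is that subrings are required to contain the multiplicative identity, which pins down the subrings of $\mathbb{Z}\ltimes G$ as exactly the $\mathbb{Z}\ltimes H$ for subgroups $H\le G$; transporting Gilmer--Heinzer through this correspondence is what produces the desired proper subgroup.
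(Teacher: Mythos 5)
Your proof is correct and follows essentially the same route as the paper: the Nagata idealization $\mathbb{Z}\ltimes G$ together with the Gilmer--Heinzer theorem handles the uncountable case (including the identification of subrings containing $(1,0)$ with sets of the form $\mathbb{Z}\ltimes H$), and Sz\'elp\'al's theorem handles the countable case. The only, harmless, difference is that you dispatch all countably infinite groups at once via Sz\'elp\'al, whereas the paper first treats the countable finitely generated case by the structure theorem and only then invokes Sz\'elp\'al for the remaining countable groups.
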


It is worth noting that the above result is not true in the non-commutative case. In fact, Shelah \cite[Theorem A]{Shelah} discovered a surprising example of  a (non-Abelian) uncountable group of cardinality $\aleph_{1}=2^{\aleph_{0}}$ such that every proper subgroup is countable (i.e. its cardinality $\leqslant\aleph_{0}$). To the best of the author's knowledge, it is not yet known whether the Gilmer-Heinzer theorem \cite[Theorem 1.2]{Gilmer-Heinzer} is true or false for non-commutative rings. But according to Shelah's counterexample, it is also very likely Gilmer-Heinzer theorem to be false in non-commutative cases.

The idempotency of infinite cardinals also leads us to the following general result (in ZFC):

\begin{theorem} For any nonzero ring $R$ and any monoid $M$ (which are not necessarily commutative), the cardinality of the monoid-ring $R[M]$ is $|R|^{|M|}$ if $M$ is finite, otherwise it is $|R|\times|M|$. 
\end{theorem}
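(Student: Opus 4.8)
The plan is to compute the cardinality of the \emph{underlying set} of $R[M]$, which is insensitive to the (possibly non-commutative) multiplications on $R$ and $M$: by construction $R[M]$ is the set of finitely supported functions $f\colon M\to R$, thought of as formal finite sums $\sum_{m\in M}r_{m}m$. The finite case needs no set theory at all: when $M$ is finite, every function $M\to R$ is automatically finitely supported, so $R[M]$ is the full function set and $|R[M]|=|R|^{|M|}$. Everything else is the case $M$ infinite, where the claim is $|R[M]|=|R|\times|M|$; I would prove this by sandwiching $|R[M]|$ between $|R|\times|M|$ on both sides and invoking the Cantor--Schr\"oder--Bernstein theorem.

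For the upper bound, I would encode elements of $R[M]$ by finite sequences: sending a finite sequence $\big((r_{1},m_{1}),\dots,(r_{k},m_{k})\big)$ over $R\times M$ to the element $\sum_{i=1}^{k}r_{i}m_{i}$ defines a surjection from the set $(R\times M)^{<\omega}$ of all finite sequences over $R\times M$ onto $R[M]$ (every element of $R[M]$ is hit, e.g. by the sequence listing the pairs $(f(m),m)$ over its finite support). Hence $|R[M]|\leqslant|(R\times M)^{<\omega}|$. Writing $\kappa=|R\times M|=|R|\times|M|$, which is an infinite cardinal because $M$ is infinite, this is exactly the point where the idempotency of infinite cardinals is used: $\kappa^{n}=\kappa$ for every finite $n\geqslant1$, so $|(R\times M)^{<\omega}|=\sum_{n\geqslant0}\kappa^{n}=\aleph_{0}\times\kappa=\kappa$. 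Therefore $|R[M]|\leqslant|R|\times|M|$.

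For the lower bound, I would exhibit two injections into $R[M]$. The map $r\mapsto re$, where $e$ is the identity element of $M$, embeds $R$ into $R[M]$, so $|R[M]|\geqslant|R|$; and since $R\neq0$ we have $1\neq0$, so the monomials $m\in M$ give pairwise distinct elements of $R[M]$, whence $|R[M]|\geqslant|M|$. As $M$ is infinite, the idempotency of infinite cardinals again gives $|R|\times|M|=\max(|R|,|M|)$, so $|R[M]|\geqslant|R|\times|M|$. Combining the two bounds finishes the infinite case.

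I do not anticipate a genuine obstacle: the argument is essentially the standard computation of the cardinality of the set of finite sequences over an infinite set, and the only things to watch are that the non-commutativity of $R$ and $M$ is irrelevant to this purely set-theoretic count, that a monoid is nonempty (so $e$ exists and the dichotomy "$M$ finite versus infinite" is exhaustive), and that the hypothesis $R\neq0$ is precisely what is needed to embed $M$ and thereby secure the lower bound $|M|$.
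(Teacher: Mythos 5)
Your proof is correct. The finite case is handled identically to the paper (all functions $M\to R$ are finitely supported), and your treatment of the infinite case is sound: the surjection from finite sequences over $R\times M$ gives the upper bound, the embeddings of $R$ (via $r\mapsto re$) and of $M$ (via the monomials, using $1\neq 0$) give the lower bound, and idempotency of infinite cardinals closes the gap. The paper takes a slightly different route to the same count: it observes $R[M]=\bigoplus_{x\in M}R$ and invokes a general lemma (Lemma \ref{Lemma 3}) that computes $\bigl|\bigoplus_{k\in S}N\bigr|$ for any sets $N$ and $S$ by exhibiting an exact \emph{bijection} with the disjoint union $\bigcup_{A\in\Fin(S)}(N\setminus\{e\})^{A}$, so that $\bigl|\bigoplus_{k\in S}N\bigr|=\sum_{A\in\Fin(S)}(|N|-1)^{|A|}$, and then uses the auxiliary fact $|\Fin(S)|=|S|$ for infinite $S$ (Lemma \ref{Lemma 4}). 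Your sandwich argument via Cantor--Schr\"oder--Bernstein avoids both the exact decomposition and the $\Fin(S)$ lemma, at the cost of needing the separate lower-bound injections; the paper's version buys a reusable formula for direct sums (which it also applies to compute $|M|$ for the polynomial-ring corollary, where $M=\bigoplus_{k\in S}\omega$), whereas yours is more self-contained. Both arguments lean on the idempotency of infinite cardinals at exactly the same point, namely $\kappa^{n}=\kappa$ and $\aleph_{0}\times\kappa=\kappa$.
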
 

As a consequence of the above result, if $R$ is a nonzero ring and $S$ is an index set with $|S|\geqslant1 $ then the cardinality of the polynomial ring $R[x_k: k\in S]$ is  $\aleph_{0}\times|R|\times|S|$.

We also show that every commutative ring and its total ring of fractions have the same cardinality. This result shows that every commutative ring is equinumerous (in bijection) with its total ring of fractions. Inspired by this observation, it is natural to ask for which rings $R$ the canonical ring map $R\rightarrow T(R)$ is an isomorphism of rings where $T(R)$ denotes the total ring of fractions of $R$. Every ring $R$ that has this property is called a \emph{ balanced ring}. Next, we investigate the classes of rings that are balanced rings. In particular, we show that every zero dimensional ring is a balanced ring. Hence all of the fields, finite rings, Artinian rings, Boolean rings and more generally von-Neumann regular (absolutely flat) rings are balanced rings. We prove that a Noetherian ring is a balanced ring if and only if its localization at every maximal ideal has zero depth. We also show that every self-injective ring is a balanced ring.

\section{Preliminaries}

Throughout this article, the cardinality of a set $S$ is denoted by $|S|$. For given sets $A$ and $B$, by $B^{A}$ we mean the set of all functions $A\rightarrow B$ from $A$ to $B$. The idempotency of infinite cardinals asserts that if $\kappa$ is an infinite cardinal number then $\kappa=\kappa^{2}$.

\begin{remark}\label{Remark 3-III} The idempotency of infinite cardinals yields that if $\alpha$ is an infinite cardinal number and $\beta$ is any cardinal number then the following assertions hold: \\
$\mathbf{(i)}$ $\alpha+\beta=\max\{\alpha,\beta\}$. \\
$\mathbf{(ii)}$ $\alpha\cdot\beta=\max\{\alpha,\beta\}$ provided that $\beta\neq0$. \\
$\mathbf{(iii)}$ If $2\leqslant\beta\leqslant\alpha$ then $\beta^{\alpha}=2^{\alpha}$. \\
$\mathbf{(iv)}$ If $\alpha<\beta$ then $\beta^{\alpha}\in\{\beta,2^{\beta}\}$.  
\end{remark}

In particular, if $S$ is an infinite set and $A$ is a finite set then $|S\setminus A|=|S|=|S\cup A|$. 

Let $\{A_{k}: k\in S\}$ be a family of sets. Recall that the cardinality of the disjoint union set $\coprod\limits_{k\in S}A_{k}=\bigcup\limits_{k\in S}A_{k}\times\{k\}$ is often denoted by the notation $\sum\limits_{k\in S}|A_{k}|$.  The cardinality of the direct product set $\prod\limits_{k\in S}A_{k}$ is also denoted by the notation $\prod\limits_{k\in S}|A_{k}|$. If the $A_{k}$ with $k\in S$ are pairwise disjoint then the cardinality of $\bigcup\limits_{k\in S}A_{k}$ equals $\sum\limits_{k\in S}|A_{k}|$.

\begin{remark}\label{Remark 2} Let $M=(x_{1},\ldots,x_{n})$ be a finitely generated module over a ring $R$. If $R/\Ann(x_{k})$ is a finite ring for all $k$, then $M$ is a finite set. Indeed, the map $\bigoplus\limits_{k=1}^{n}R/\Ann(x_{k})\rightarrow M$ given by $\big(r_{k}+\Ann(x_{k})\big)\mapsto
\sum\limits_{k=1}^{n}r_{k}x_{k}$ is a surjective morphism of $R$-modules. 
\end{remark}

\section{Cardinality of Groups}

It is obvious that a finite algebraic structure (such as a group, ring etc) has no proper subalgebra of the same cardinality (since a finite set has no proper subset of the same cardinality). But in the infinite case, different things can happen for algebraic structures which is the subject of the following results.     

\begin{lemma}\label{Theorem 3-uch} An infinite Abelian group $G$ has a proper subgroup with the same cardinality if one of the following conditions hold: \\
$\mathbf{(i)}$ $G$ is uncountable. \\
$\mathbf{(ii)}$ $G$ is countable and finitely generated.
\end{lemma}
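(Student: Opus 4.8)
The plan is to treat the two cases separately, in each producing an explicit proper subgroup and checking its cardinality with the cardinal arithmetic of Remark~\ref{Remark 3-III}. For case (ii), write $G\cong\mathbb{Z}^{r}\oplus T$ with $T$ finite, using the structure theorem for finitely generated Abelian groups; since $G$ is infinite, $r\geqslant1$. Then $2\mathbb{Z}\times\mathbb{Z}^{r-1}\times T$ is a subgroup of index $2$, hence proper, and it is infinite because $r\geqslant1$; being an infinite subset of a countable set it has cardinality $\aleph_{0}=|G|$. (Equivalently: $G$ has an element of infinite order, so $2G\neq G$ while $2G$ is still infinite.)

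Now let $G$ be uncountable, $|G|=\kappa$, and suppose first that $G$ is \emph{not} divisible. If $pG=G$ held for every prime $p$, then $nG=G$ would follow for every $n\geqslant1$ by induction on the number of prime factors of $n$, so $G$ would be divisible; hence fix a prime $p$ with $pG\neq G$. Multiplication by $p$ yields an exact sequence $0\to G[p]\to G\to pG\to0$, where $G[p]=\{x\in G:px=0\}$, so $\kappa=|G|\leqslant|G[p]|\cdot|pG|$ and therefore $\kappa=\max\{|G[p]|,|pG|\}$ since $\kappa$ is infinite. If $|pG|=\kappa$ then $pG$ is a proper subgroup of cardinality $\kappa$ and we are done. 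Otherwise $|G[p]|=\kappa$; but $G[p]$ is a vector space over the field $\mathbb{Z}/p\mathbb{Z}$ of cardinality $\kappa$, hence of dimension $\kappa$ as $\kappa$ is infinite, so deleting one vector from a basis gives a proper subspace $W\subsetneq G[p]$ with $\dim W=\kappa$ and thus $|W|=\kappa$; then $W\subsetneq G$ is the desired subgroup.

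It remains to handle divisible $G$. A nonzero divisible Abelian group contains a copy of $\mathbb{Q}$ or of a Prüfer group $\mathbb{Z}/p^{\infty}$, and such a subgroup, being an injective $\mathbb{Z}$-module, is a direct summand; so $G\cong\mathbb{Q}\oplus G'$ or $G\cong\mathbb{Z}/p^{\infty}\oplus G'$ for some subgroup $G'$, and because the first summand is countable while $\kappa=|G|$ is uncountable, necessarily $|G'|=\kappa$. Replacing the first summand by $\mathbb{Z}\subsetneq\mathbb{Q}$, respectively by $\mathbb{Z}/p\mathbb{Z}\subsetneq\mathbb{Z}/p^{\infty}$, produces a proper subgroup $H$ of $G$ with $|H|=|G'|=\kappa$, which finishes the proof. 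The step I expect to demand the most care is the uncountable case: one has to locate the proper subgroup inside a sufficiently large part of $G$ and then confirm, using the idempotency of $\kappa$, that no cardinality is lost in passing to it — the dichotomy $\kappa=\max\{|G[p]|,|pG|\}$ being the crucial structural input in the non-divisible situation.
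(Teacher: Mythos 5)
Your proof is correct, but for the uncountable case it takes a genuinely different route from the paper. Case (ii) is essentially identical in both: the structure theorem yields a free summand of positive rank, and an index-two (or any proper infinite) subgroup of a countable group has cardinality $\aleph_{0}$. For case (i), however, the paper does not argue inside the group at all: it forms the Nagata idealization $\mathbb{Z}\ltimes G$, which is an uncountable commutative ring of cardinality $|G|$, invokes the Gilmer--Heinzer theorem to get a proper subring $R$ of the same cardinality, observes that $R\supseteq\mathbb{Z}\times\{0\}$ forces $R=\mathbb{Z}\times H$ for a proper subgroup $H$, and reads off $|H|=|G|$. Your argument instead stays entirely within Abelian group theory: the divisible/non-divisible dichotomy, the exact sequence $0\to G[p]\to G\to pG\to 0$ giving $\kappa=\max\{|G[p]|,|pG|\}$, the $\mathbb{F}_{p}$-vector-space structure of $G[p]$ (whose infinite dimension must equal $\kappa$, so dropping a basis vector works), and, in the divisible case, splitting off an injective summand $\mathbb{Q}$ or $\mathbb{Z}(p^{\infty})$ and shrinking it. What your approach buys is self-containment and elementarity --- no appeal to the (nontrivial) Gilmer--Heinzer result on subrings of uncountable commutative rings, at the cost of a longer case analysis and reliance on the structure theory of divisible groups plus the existence of bases (AC, which is assumed throughout anyway). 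What the paper's approach buys is brevity and an illustration of how a purely ring-theoretic cardinality theorem transfers to groups via idealization, which fits the article's theme. All the individual steps in your argument check out, including the cardinality bookkeeping via the idempotency of $\kappa$.
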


\begin{proof} $\mathbf{(i):}$ We know that every Abelian group naturally has a $\mathbb{Z}$-module structure. Then consider the Nagata idealization ring $\mathbb{Z}\ltimes G:=\mathbb{Z}\times G$ whose addition is the componentwise $(r,x)+(r',y)=(r+r',x+y)$ and whose multiplication is defined as $(r,x)\cdot(r',y)=(rr',ry+r'x)$. Using Remark \ref{Remark 3-III}(ii), then it is clear that this ring $\mathbb{Z}\ltimes G$ is an uncountable commutative ring of cardinality $|\mathbb{Z}|\times |G|=|G|$. Then by applying  \cite[Theorem 1.2]{Gilmer-Heinzer} which asserts that every uncountable commutative ring has a proper subring of the same cardinality, we get a proper subring $R$ of $\mathbb{Z}\ltimes G$ of cardinality $|G|$. But the unit element $(1,0)$ of the ring $\mathbb{Z}\ltimes G$ is a member of $R$ where $0$ is the identity element of $G$. It follows that $\mathbb{Z}\times\{0\}\subseteq R$. The map $\mathbb{Z}\rightarrow\mathbb{Z}\ltimes G$ given by $r\mapsto(r,0)$ is a morphism of rings and so $\mathbb{Z}\ltimes G$ has a $\mathbb{Z}$-module structure through this map. Then the map $f:G\rightarrow\mathbb{Z}\times G$ given by $x\mapsto(0,x)$ is a morphism of $\mathbb{Z}$-modules (Abelian groups) and so $H=f^{-1}(R)$ is a subgroup of $G$. But we have $R=\mathbb{Z}\times H$. Thus $H$ is a ``proper" subgroup of $G$. We also have  $|G|=|R|=|\mathbb{Z}\times H|=\max\{\aleph_{0},|H|\}=|H|$. \\
$\mathbf{(ii):}$ By the structure theorem of finitely generated Abelian groups or more generally by the structure theorem of finitely generated modules over PIDs (principal ideal domains), we have a factorization $G=T(G)\oplus F$ where $T(G)=\{x\in G:\exists n\geqslant1, nx=0\}$ is the trosion subgroup of $G$ (the subgroup of all elements of finite order) and 
$F$ is a free subgroup of $G$ of finite rank (i.e. $F\simeq \mathbb{Z}^{d}$ for some natural number $d\geqslant0$). But $F$ is nonzero, because otherwise $G=T(G)$ is generated by finitely many elements of finite order and hence by Remark \ref{Remark 2}, $G$ will be a finite group which is a contradiction. Then $F\simeq \mathbb{Z}^{d}$ for some positive natural number $d\geqslant1$. But $\mathbb{Z}^{d}$ has a proper (free) subgroup of infinite order (it is clear for $d\geqslant2$, if $d=1$ then $2\mathbb{Z}$ is an infinite proper subgroup of $\mathbb{Z}$). Thus $G$ has a proper subgroup $H$ of infinite order. But $G$ is countable and so $|H|=|G|$.
\end{proof}

It is important to notice that part (ii) of the above lemma fails in the infinitely generated case.   
As an example, fix a prime number $p$ then the Prüfer group $\mathbb{Z}(p^{\infty})=\mathbb{Z}[1/p]/\mathbb{Z}$ is countably infinite Abelian group but it has no proper subgroup of infinite order. In fact, every proper subgroup of the  
Prüfer group $\mathbb{Z}(p^{\infty})$ is precisely of the form $(1/p^{n}\mathbb{Z})/\mathbb{Z}$ for some natural number $n\geqslant0$ which is a finite group of order $p^{n}$. Also note that Lemma \ref{Theorem 3-uch} is not true in the non-commutative case. Indeed by \cite[Theorem A]{Shelah}, there exists a non-Abelian uncountable group of cardinality $\aleph_{1}=2^{\aleph_{0}}$ such that every proper subgroup is countable (i.e. its cardinality $\leqslant\aleph_{0}$).

After proving Lemma \ref{Theorem 3-uch}, we noticed that similar results have already been proved by Scott \cite[Theorem 9]{Scott} and Szélpál \cite{Szelpal}. By combining this lemma with Szélpál's theorem, we obtain the following more general result: 

\begin{theorem}\label{Theorem 4-iv} An Abelian group $G$ has no proper subgroup with the same cardinality if and only if $G$ is a finite group or a Prüfer group.
\end{theorem}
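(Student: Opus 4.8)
The plan is to prove Theorem~\ref{Theorem 4-iv} by combining Lemma~\ref{Theorem 3-uch} with Sz\'elp\'al's theorem \cite{Szelpal}, treating the two directions of the equivalence separately. For the ``if'' direction, suppose $G$ is a finite group or a Pr\"ufer group; I must show $G$ has no proper subgroup of the same cardinality. The finite case is immediate since a finite set has no proper subset of the same cardinality, as already remarked at the start of Section~3. For the Pr\"ufer case, fix the prime $p$ with $G \cong \mathbb{Z}(p^{\infty})$; as noted in the discussion following Lemma~\ref{Theorem 3-uch}, every proper subgroup of $\mathbb{Z}(p^{\infty})$ has the form $(1/p^{n}\mathbb{Z})/\mathbb{Z}$ and is finite (of order $p^{n}$), whereas $G$ itself is countably infinite, so no proper subgroup can have cardinality $|G|=\aleph_{0}$.

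For the ``only if'' direction, suppose $G$ has no proper subgroup with the same cardinality; I want to conclude $G$ is finite or Pr\"ufer. If $G$ is finite there is nothing to prove, so assume $G$ is infinite. Then Lemma~\ref{Theorem 3-uch}(i) rules out $G$ being uncountable (an uncountable Abelian group always has a proper subgroup of the same cardinality), so $G$ is countably infinite. By Lemma~\ref{Theorem 3-uch}(ii), $G$ cannot be finitely generated either. Now I invoke Sz\'elp\'al's theorem: among infinite Abelian groups, the Pr\"ufer groups are exactly those all of whose proper subgroups are finite. It remains to check that our hypothesis forces every proper subgroup of $G$ to be finite --- once that is established, Sz\'elp\'al's theorem gives that $G$ is a Pr\"ufer group and we are done.

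The one point that needs a small argument is exactly this last reduction: why does ``no proper subgroup of cardinality $|G|$'' imply ``every proper subgroup is finite'' for a countably infinite $G$? Suppose, for contradiction, that $H \subsetneq G$ is an infinite proper subgroup. Since $G$ is countable, $|H| = \aleph_{0} = |G|$, contradicting the hypothesis. Hence every proper subgroup of $G$ is finite, and Sz\'elp\'al's theorem applies. I do not expect any genuine obstacle here; the proof is a clean assembly of the cited lemma and Sz\'elp\'al's theorem, with the only care being to track that ``same cardinality'' for a countable group collapses to the dichotomy ``finite versus countably infinite''.
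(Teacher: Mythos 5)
Your proof is correct and follows essentially the same route as the paper's: both directions use the finite/Prüfer dichotomy, Lemma~\ref{Theorem 3-uch} to reduce to the countably infinite non-finitely-generated case, and Szélpál's theorem to conclude. The only difference is that you spell out explicitly why every proper subgroup must then be finite (an infinite subgroup of a countable group has cardinality $\aleph_0=|G|$), a step the paper leaves implicit.
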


\begin{proof} First assume $G$ has no proper subgroup with the same cardinality. If $G$ is not a finite group, then by Lemma \ref{Theorem 3-uch}, $G$ is a non-finitely generated countably infinite Abelian group. Hence, every proper subgroup of $G$ is finite. 
But by Szélpál's theorem \cite{Szelpal} which asserts that the Prüfer groups are the only infinite Abelian groups with only finite proper subgroups, we get that $G$ is a Prüfer group, i.e. $G\simeq\mathbb{Z}(p^{\infty})$ for some prime number $p$. The reverse implication is clear, since a Prüfer group has no proper subgroup of infinite order.
\end{proof}

Under the light of the above result, then Lemma \ref{Theorem 3-uch} can be improved as:

\begin{corollary} An infinite Abelian group has a proper subgroup with the same cardinality if and only if it is not a Prüfer group.
\end{corollary}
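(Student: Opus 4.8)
The plan is to obtain this simply as a specialization of Theorem~\ref{Theorem 4-iv}. That theorem already says that an Abelian group $G$ has no proper subgroup with the same cardinality if and only if $G$ is finite or a Prüfer group. First I would restrict attention to the hypothesis of the corollary, namely that $G$ is \emph{infinite}. Then the disjunct ``$G$ is finite'' in Theorem~\ref{Theorem 4-iv} is automatically excluded, so for such $G$ the equivalence collapses to
\[
\text{$G$ has no proper subgroup of cardinality $|G|$} \iff \text{$G$ is a Pr\"ufer group}.
\]
Negating both sides of this biconditional gives exactly the assertion of the corollary: an infinite Abelian group has a proper subgroup of cardinality $|G|$ if and only if it is not a Prüfer group.

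I would then note that no new argument is required, since all the content is already packaged in the results leading up to Theorem~\ref{Theorem 4-iv}: Lemma~\ref{Theorem 3-uch} disposes of the uncountable case (via the Nagata idealization $\mathbb{Z}\ltimes G$ and the Gilmer--Heinzer theorem) and of the countable finitely generated case (via the structure theorem for finitely generated Abelian groups), while Szélpál's theorem identifies the remaining countable, non-finitely-generated exceptions as precisely the Prüfer groups; the reverse direction is the elementary fact that every proper subgroup of $\mathbb{Z}(p^\infty)$ is finite.

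Since the corollary is a direct logical consequence of an already-proved theorem, there is no real obstacle. The only point demanding a little care is the bookkeeping of the word ``infinite'': one must ensure that the possibility ``$G$ finite'' is removed by hypothesis, not by the argument, so that the passage to the contrapositive is legitimate and the statement of the corollary comes out in the clean ``not a Prüfer group'' form rather than the more cumbersome ``neither finite nor a Prüfer group''.
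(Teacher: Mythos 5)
Your proposal is correct and matches the paper exactly: the paper also derives the corollary as an immediate consequence of Theorem~\ref{Theorem 4-iv}, with the infinitude hypothesis eliminating the ``finite'' disjunct and the contrapositive yielding the stated form.
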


\begin{proof} It is an immediate consequence of Theorem \ref{Theorem 4-iv}.
\end{proof}

\section{Cardinality of rings}

To prove the main results of this section, we need the following lemmas which are interesting in their own way.

\begin{lemma}\label{Lemma 1} For any sets $M$ and $S$ we have $\sum\limits_{k\in S}|M|=|M|\times|S|$ and $\prod\limits_{k\in S}|M|=|M|^{|S|}$.
\end{lemma}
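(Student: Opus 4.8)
The plan is to observe that both identities are essentially a matter of unwinding the definitions recalled in Section~2, with no appeal to the idempotency of infinite cardinals (or even to the axiom of choice) required. I would treat the two statements in turn.

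For the first identity, I would start from the definition $\sum_{k\in S}|M|=\big|\coprod_{k\in S}M\big|$, where by convention $\coprod_{k\in S}M=\bigcup_{k\in S}M\times\{k\}$. The point is simply that this disjoint union \emph{is} the Cartesian product: every element of $\bigcup_{k\in S}M\times\{k\}$ is a pair $(m,k)$ with $m\in M$ and $k\in S$, and conversely, so $\coprod_{k\in S}M=M\times S$ as a set. Hence $\sum_{k\in S}|M|=|M\times S|=|M|\times|S|$ by the definition of the product of two cardinals.

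For the second identity, I would start from $\prod_{k\in S}|M|=\big|\prod_{k\in S}M\big|$. The direct product $\prod_{k\in S}M$ of the constant family is by definition the set of all functions $f$ on $S$ with $f(k)\in M$ for every $k\in S$, i.e.\ exactly the function set $M^{S}$. Therefore $\prod_{k\in S}|M|=|M^{S}|=|M|^{|S|}$ by the definition of cardinal exponentiation (recalled in Section~2, where $B^{A}$ denotes the set of functions $A\to B$).

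I do not anticipate any real obstacle here; the only thing worth being careful about is keeping the bookkeeping between the notations $\sum_{k\in S}|A_k|$, $\prod_{k\in S}|A_k|$ and the underlying sets $\coprod_{k\in S}A_k$, $\prod_{k\in S}A_k$ straight, and recording explicitly that the constant-family disjoint union is a Cartesian product while the constant-family direct product is a function space. This lemma will then be used in the next results to compute cardinalities of monoid-rings and polynomial rings.
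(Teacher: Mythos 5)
Your proposal is correct and follows exactly the same route as the paper: identify the constant-family disjoint union $\coprod_{k\in S}M$ with the Cartesian product $M\times S$ for the first identity, and the constant-family direct product $\prod_{k\in S}M$ with the function set $M^{S}$ for the second. Nothing is missing.
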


\begin{proof} It is clear that $\coprod\limits_{k\in S}M=M\times S$. It follows that $\sum\limits_{k\in S}|M|=|M\times S|=|M|\times|S|$. We also have $\prod\limits_{k\in S}M=M^{S}$ and so $\prod\limits_{k\in S}|M|=|M|^{|S|}$. 
\end{proof}

\begin{remark}\label{Remark 1} If $\{A_{k}: k\in S\}$ and $\{B_{k}: k\in S\}$ are two families of sets such that $|A_{k}|\leqslant|B_k|$ for all $k$, then $\sum\limits_{k\in S}|A_{k}|\leqslant\sum\limits_{k\in S}|B_{k}|$ and $\prod\limits_{k\in S}|A_{k}|\leqslant\prod\limits_{k\in S}|B_{k}|$. Indeed by the hypothesis, for each $k\in S$ we have an injective function $f_k:A_{k}\rightarrow B_{k}$ and so the maps $\coprod\limits_{k\in S}A_{k}\rightarrow\coprod\limits_{k\in S}B_{k}$ and $\prod\limits_{k\in S}A_{k}\rightarrow\prod\limits_{k\in S}B_{k}$ respectively given by $(x,k)\mapsto\big(f_{k}(x),k\big)$ and $(x_{k})\mapsto\big(f_{k}(x_{k})\big)$ are injective. It is also well known that if $|A_{k}|<|B_k|$ for all $k\in S$, then $\sum\limits_{k\in S}|A_k|<\prod\limits_{k\in S}|B_k|$.
\end{remark}

For any set $S$ by $\Fin(S)$ we mean the set of all finite subsets of $S$. If $S$ is finite then $\Fin(S)=\mathscr{P}(S)$ and hence $|\Fin(S)|=2^{|S|}$.
In the infinite case we have the following result:

\begin{lemma}\label{Lemma 4} A set $S$ is infinite if and only if $|\Fin(S)|=|S|$.
\end{lemma}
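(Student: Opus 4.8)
The statement has two directions. The hard direction is the "only if": if $S$ is infinite, then $|\Fin(S)| = |S|$. The "if" direction (contrapositive: $S$ finite $\Rightarrow |\Fin(S)| = 2^{|S|} > |S|$) is immediate from the remark just preceding the lemma that $\Fin(S) = \mathscr{P}(S)$ when $S$ is finite, together with Cantor's theorem $2^n > n$ for finite $n$ — so really nothing to do there.

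For the main direction, the plan is to stratify $\Fin(S)$ by cardinality of the subsets: write $\Fin(S) = \bigcup_{n \geq 0} S_n$ where $S_n$ is the set of $n$-element subsets of $S$, a disjoint union over $n \in \omega$. First I would bound $|S_n|$ from above: there is a surjection $S^n \twoheadrightarrow S_n$ sending an $n$-tuple to the set of its entries (for $n \geq 1$), so $|S_n| \leq |S^n| = |S|^n = |S|$ by the idempotency of the infinite cardinal $|S|$ (iterating Remark \ref{Remark 3-III}(ii), or equivalently Lemma \ref{Lemma 1}); and $|S_0| = 1 \leq |S|$. Hence
\[
|\Fin(S)| = \sum_{n \in \omega} |S_n| \leq \sum_{n \in \omega} |S| = \aleph_0 \times |S| = |S|,
\]
where the second-to-last step uses Lemma \ref{Lemma 1} and the last uses Remark \ref{Remark 3-III}(ii) since $|S|$ is infinite (so $\aleph_0 \leq |S|$). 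The reverse inequality $|S| \leq |\Fin(S)|$ is trivial: the map $s \mapsto \{s\}$ embeds $S$ into $\Fin(S)$. By Cantor–Schröder–Bernstein, $|\Fin(S)| = |S|$.

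The only place where anything could go wrong is the upper bound on $|S_n|$, and the subtlety there is entirely in the appeal to idempotency: we need $|S|^n = |S|$ for every finite $n \geq 1$, which follows by an easy induction from $|S|^2 = |S|$ (Remark \ref{Remark 3-III}(ii) with $\beta = |S|$), and then we need to sum countably many copies of $|S|$, which is $\aleph_0 \cdot |S| = |S|$ again by Remark \ref{Remark 3-III}(ii) — so the whole argument is really just ``idempotency applied twice, plus Cantor–Schröder–Bernstein.'' I expect no genuine obstacle; the proof is short once one decides to stratify by subset size rather than trying to biject $\Fin(S)$ with $S$ directly.
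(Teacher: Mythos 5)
Your proof is correct and follows essentially the same route as the paper: stratify $\Fin(S)$ by subset size, bound each stratum via a surjection from $S^{n}$ using idempotency, and sum countably many copies of $|S|$. The only cosmetic difference is that your map on $n$-tuples actually lands in $\bigcup_{d=1}^{n}S_{d}$ rather than in $S_{n}$ alone (tuples with repeated entries give smaller sets), which the paper handles by taking that union as the codomain; the bound $|S_{n}|\leqslant|S|^{n}$ is unaffected.
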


\begin{proof} First assume $S$ is infinite. The map $S\rightarrow\Fin(S)$ given by $x\mapsto\{x\}$ is injective and hence $|S|\leqslant|\Fin(S)|$. To prove the reverse inequality we act as follows. 
For each natural number $n\geqslant0$, let $F_{n}(S)$ be the set of all (finite) subsets of $S$ of cardinality $n$. We have $\Fin(S)=\bigcup\limits_{n\geqslant0}F_{n}(S)$ and the $F_{n}(S)$ are pairwise disjoint. It follows that $|\Fin(S)|=\sum\limits_{n\geqslant0}|F_{n}(S)|$. 
For each $n\geqslant1$, the map $S^{n}\rightarrow\bigcup\limits_{d=1}^{n}F_{d}(S)$ given by $(x_{1},\ldots,x_{n})\mapsto\{x_{1},\ldots,x_{n}\}$ is surjective. Using the idempotency of infinite cardinals, it follows that $|F_{n}(S)|\leqslant\beta_{n}:=
\sum\limits_{d=1}^{n}|F_{d}(S)|
\leqslant|S|^{n}=|S|$ for all $n\geqslant1$. Then by Remark \ref{Remark 1}, $|\Fin(S)|=1+\sum\limits_{n\geqslant1}
|F_{n}(S)|\leqslant1+\sum\limits_{n\geqslant1}\beta_{n}
\leqslant1+\sum\limits_{n\geqslant1}|S|=
1+\aleph_{0}\times|S|=|S|$. Thus $|\Fin(S)|=|S|$. The reverse implication follows from the Cantor theorem (which asserts that for any set $S$ then $|S|<2^{|S|}$).
\end{proof}

\begin{lemma}\label{Lemma 2} Let $\{A_{k}: k\in S\}$ be a family of sets. If the index set $S$ is infinite and $1\leqslant|A_{k}|\leqslant|S|$ for all $k$, then $\sum\limits_{k\in S}|A_{k}|=|S|$.
\end{lemma}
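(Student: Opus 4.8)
The plan is to establish the two inequalities $|S|\leqslant\sum_{k\in S}|A_{k}|$ and $\sum_{k\in S}|A_{k}|\leqslant|S|$ separately and then conclude by the Cantor--Schr\"oder--Bernstein theorem.

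For the lower bound, I would use the hypothesis $|A_{k}|\geqslant1$: since each $A_{k}$ is nonempty, choose an element $a_{k}\in A_{k}$ for every $k\in S$ (this is where the axiom of choice enters, which is harmless since we are already working in ZFC). The map $S\rightarrow\coprod_{k\in S}A_{k}$ given by $k\mapsto(a_{k},k)$ is then injective, because the second coordinate already distinguishes the images; hence $|S|\leqslant\sum_{k\in S}|A_{k}|$.

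For the upper bound, I would invoke the hypothesis $|A_{k}|\leqslant|S|$ together with Remark \ref{Remark 1}, which gives $\sum_{k\in S}|A_{k}|\leqslant\sum_{k\in S}|S|$. By Lemma \ref{Lemma 1} (applied with $M=S$) the right-hand side equals $|S|\times|S|=|S|^{2}$, and since $S$ is infinite the idempotency of infinite cardinals yields $|S|^{2}=|S|$. Combining the two inequalities gives $\sum_{k\in S}|A_{k}|=|S|$, as desired.

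There is no real obstacle here: the only subtle point is making sure both bounds are genuinely needed (the lower bound uses $|A_{k}|\geqslant1$, the upper bound uses $|A_{k}|\leqslant|S|$ and idempotency), and that the disjointness built into the coproduct is what makes the lower-bound map injective regardless of whether the $A_{k}$ themselves overlap.
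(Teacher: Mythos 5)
Your proof is correct and follows essentially the same route as the paper: the paper also bounds the sum below by $\sum_{k\in S}1=|S|$ (your choice of elements $a_{k}$ is just the explicit content of that step) and above by $\sum_{k\in S}|S|=|S|^{2}=|S|$ via Remark \ref{Remark 1}, Lemma \ref{Lemma 1}, and idempotency.
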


\begin{proof} By Lemma \ref{Lemma 1} and Remark \ref{Remark 1} and using the idempotency of infinite cardinals, we have $|S|=\sum\limits_{k\in S}1\leqslant\sum\limits_{k\in S}|A_{k}|\leqslant\sum\limits_{k\in S}|S|=|S|^{2}=|S|$ and so $\sum\limits_{k\in S}|A_{k}|=|S|$.
\end{proof}

\begin{lemma} Let $\{A_{k}: k\in S\}$ be a family of sets. If the index set $S$ is infinite and $2\leqslant|A_{k}|\leqslant|S|$ for all $k$, then $\prod\limits_{k\in S}|A_{k}|=2^{|S|}$.
\end{lemma}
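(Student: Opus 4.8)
The plan is to sandwich $\prod_{k\in S}|A_{k}|$ between $2^{|S|}$ and $|S|^{|S|}$ and then to observe that these two cardinals coincide. For the lower bound, the hypothesis $|A_{k}|\geqslant 2$ for every $k$, together with Remark~\ref{Remark 1} and Lemma~\ref{Lemma 1}, gives $\prod_{k\in S}|A_{k}|\geqslant\prod_{k\in S}2=2^{|S|}$. For the upper bound, the hypothesis $|A_{k}|\leqslant|S|$ and the same two results yield $\prod_{k\in S}|A_{k}|\leqslant\prod_{k\in S}|S|=|S|^{|S|}$. This is the exact product analogue of the sum computation carried out in Lemma~\ref{Lemma 2}.

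It then remains to check that $|S|^{|S|}=2^{|S|}$. The inequality $2^{|S|}\leqslant|S|^{|S|}$ is immediate from $2\leqslant|S|$ and monotonicity of cardinal exponentiation. For the reverse inequality, Cantor's theorem gives $|S|\leqslant 2^{|S|}$, so $|S|^{|S|}\leqslant\bigl(2^{|S|}\bigr)^{|S|}=2^{|S|\cdot|S|}=2^{|S|^{2}}$, and now the idempotency of the infinite cardinal $|S|$ (i.e. $|S|^{2}=|S|$) shows $|S|^{|S|}\leqslant 2^{|S|}$. Combining the two chains, $2^{|S|}\leqslant\prod_{k\in S}|A_{k}|\leqslant|S|^{|S|}=2^{|S|}$, which forces $\prod_{k\in S}|A_{k}|=2^{|S|}$.

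I do not anticipate a genuine obstacle: the only non-formal ingredient is the identity $|S|^{2}=|S|$, which is precisely the idempotency of infinite cardinals assumed throughout, and everything else is routine monotonicity already recorded in Lemma~\ref{Lemma 1} and Remark~\ref{Remark 1}. The one point deserving a moment's care is the correct use of the exponent law $\bigl(2^{\alpha}\bigr)^{\beta}=2^{\alpha\beta}$; this holds for arbitrary cardinals in ZF and needs no appeal to choice beyond what is already built into the idempotency hypothesis.
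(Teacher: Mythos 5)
Your proof is correct and follows essentially the same route as the paper: both sandwich the product between $\prod_{k\in S}2=2^{|S|}$ and $\prod_{k\in S}|S|=|S|^{|S|}$ and then identify $|S|^{|S|}$ with $2^{|S|}$. The only difference is that you derive $|S|^{|S|}=2^{|S|}$ by hand via $(2^{|S|})^{|S|}=2^{|S|^{2}}=2^{|S|}$, whereas the paper simply cites Remark~\ref{Remark 3-III}(iii), which encapsulates exactly that computation.
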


\begin{proof} By Lemma \ref{Lemma 1} and Remarks \ref{Remark 3-III} and \ref{Remark 1}, we have $2^{|S|}=\prod\limits_{k\in S}2\leqslant\prod\limits_{k\in S}|A_{k}|\leqslant\prod\limits_{k\in S}|S|=|S|^{|S|}=2^{|S|}$. Thus $\prod\limits_{k\in S}|A_{k}|=2^{|S|}$. 
\end{proof}

\begin{remark} Let $\{R_{k}: k\in S\}$ be a family of nonzero rings. If the index set $S$ is infinite and $|R_k|\leqslant|S|$ for all $k$, then the cardinality of the direct product ring $\prod\limits_{k\in S}R_{k}$ is $2^{|S|}$. Similarly, if $R$ is a finite nonzero ring then the cardinality of the formal power series ring $R[[x]]$ is $2^{\aleph_{0}}$. If $R$ is an infinite ring of cardinality $\kappa$ then the cardinality of  $R[[x]]$ is $\kappa^{\aleph_{0}}$. For example, the cardinality of $\mathbb{C}[[x]]$ is $(2^{\aleph_{0}})^{\aleph_{0}}=2^{\aleph_{0}}$. As another example, if $R$ is a ring of cardinality $\aleph_{\omega}=\sup\{\aleph_{d}: d\geqslant0\}$ then $R[[x]]$ is of cardinality $(\aleph_{\omega})^{\aleph_{0}}=2^{\aleph_{\omega}}$, because $2^{\aleph_{\omega}}\geqslant
(\aleph_{\omega})^{\aleph_{0}}=
\prod\limits_{d\geqslant0}\aleph_{\omega}>
\sum\limits_{d\geqslant0}\aleph_{d}=\aleph_{\omega}$ and hence $(\aleph_{\omega})^{\aleph_{0}}=2^{\aleph_{\omega}}$. Note that there are infinite rings of any cardinality. More precisely, for any infinite cardinal $\kappa$, there exists a (Boolean) ring $R$ with $|R|=\kappa=|\Spec(R)|$. 
\end{remark}

Let $S$ be an index set, $M$ a nonempty set and let $e$ be a (special) fixed element in $M$. Then by $\bigoplus\limits_{k\in S}M$ we mean the set of all functions $f:S\rightarrow M$ with finite support (with respect to $e$), i.e. $\Supp(f)=\{k\in S: f(k)\neq e\}$ is a finite set. In fact,  $\bigoplus\limits_{k\in S}M$ is the set of all sequences $(x_{k})_{k\in S}$ in the direct product set $\prod\limits_{k\in S}M$ such that the $x_{k}=e$ except for a finite number of indices $k$. It is clear that the cardinality of $\bigoplus\limits_{k\in S}M$ is independent of the choice of $e\in M$.  For example, if $M$ is a monoid then $e$ is the identity element of $M$.
If $M$ is the empty set then $\bigoplus\limits_{k\in S}M$ is also considered the empty set.

\begin{lemma}\label{Lemma 3} For any sets $M$ and $S$, we have $|\bigoplus\limits_{k\in S}M|=|M|^{|S|}$ if $S$ is  finite or $|M|\leqslant1$, otherwise $|\bigoplus\limits_{k\in S}M|=|M|\times |S|$.
\end{lemma}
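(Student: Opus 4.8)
The plan is to split into three regimes: ($S$ finite), ($|M|\leqslant 1$), and ($S$ infinite together with $|M|\geqslant 2$); only the last needs any work. If $S$ is finite then every function $S\to M$ has finite support, so $\bigoplus_{k\in S}M=M^{S}$ and the value $|M|^{|S|}$ is immediate. If $|M|\leqslant 1$, then either $M$ is a singleton, in which case there is exactly one function $S\to M$ and the cardinality is $1=1^{|S|}$, or $M=\emptyset$, in which case there are no such functions unless $S=\emptyset$; in every instance this agrees with $|M|^{|S|}$ under the standard convention $0^{0}=1$.

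For the main case, assume $S$ is infinite and $|M|\geqslant 2$, write $X=\bigoplus_{k\in S}M$, fix the distinguished element $e\in M$, and pick $m_{0}\in M$ with $m_{0}\neq e$. For the lower bound, the functions supported on a single fixed index $k_{0}$ form a copy of $M$, so $|M|\leqslant|X|$, and the functions $\delta_{k}$ (for $k\in S$) taking the value $m_{0}$ at $k$ and $e$ elsewhere are pairwise distinct, so $|S|\leqslant|X|$; by Remark \ref{Remark 3-III}(ii) this yields $|M|\times|S|=\max\{|M|,|S|\}\leqslant|X|$. For the upper bound, write $X=\bigcup_{A\in\Fin(S)}T_{A}$ where $T_{A}=\{f\in X:\Supp(f)\subseteq A\}$; then $T_{A}$ is in bijection with $M^{A}$, so $|T_{A}|=|M|^{|A|}$. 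Since $|A|$ is finite, $|M|^{|A|}$ is finite when $M$ is finite and equals $|M|$ (for $A\neq\emptyset$) when $M$ is infinite, hence $|T_{A}|\leqslant\max\{\aleph_{0},|M|\}\leqslant|M|\times|S|$ in all cases. Using $|\Fin(S)|=|S|$ from Lemma \ref{Lemma 4}, Remark \ref{Remark 1}, and the idempotency of infinite cardinals, we get $|X|\leqslant\sum_{A\in\Fin(S)}|T_{A}|\leqslant\sum_{A\in\Fin(S)}(|M|\times|S|)=|S|\times(|M|\times|S|)=|M|\times|S|$. Combining the two bounds gives the stated equality.

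The one point requiring care is the uniform treatment of the terms $|M|^{|A|}$: one must notice that a finite power of any cardinal is bounded above by $\max\{\aleph_{0},|M|\}$, which is itself $\leqslant|M|\times|S|$ because $|S|$ is infinite, after which the summation collapses by Lemma \ref{Lemma 2} (when $|M|\leqslant|S|$) or directly by Lemma \ref{Lemma 1} and Remark \ref{Remark 3-III} in general. Everything else is routine bookkeeping with the conventions for $\bigoplus$ already fixed in the text.
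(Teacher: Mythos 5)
Your proof is correct, and it follows the same basic strategy as the paper: stratify $\bigoplus_{k\in S}M$ by finite supports and exploit $|\Fin(S)|=|S|$ from Lemma \ref{Lemma 4}. The one genuine difference is in execution. The paper sets up an exact bijection of $\bigoplus_{k\in S}M$ with the \emph{disjoint} union $\bigcup_{A\in\Fin(S)}(M\setminus\{e\})^{A}$ (functions with support exactly $A$), which gives the cardinality as the precise sum $\sum_{A\in\Fin(S)}(|M|-1)^{|A|}$, evaluated via Lemma \ref{Lemma 2} when $M$ is finite. You instead use the overlapping cover by the sets $T_{A}=\{f:\Supp(f)\subseteq A\}$, which only yields an upper bound, and you compensate with an explicit lower bound ($M$ embeds via functions supported at one index, $S$ embeds via the maps $\delta_{k}$). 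Your sandwich argument buys you freedom from the disjointness bookkeeping and from the slightly awkward $(|M|-1)^{|A|}$ terms; the paper's version buys an exact count in one step. Both are complete and both rest on the same key facts (Lemma \ref{Lemma 4}, Remark \ref{Remark 3-III}, and idempotency), so this is a matter of taste rather than substance.
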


\begin{proof} If $S$ is finite then $\bigoplus\limits_{k\in S}M=\prod\limits_{k\in S}M$ and so $|\bigoplus\limits_{k\in S}M|=|M|^{|S|}$. If $M$ is singleton then $\bigoplus\limits_{k\in S}M$ is also singleton and so $|\bigoplus\limits_{k\in S}M|=1=|M|^{|S|}$. If $|M|=0$ then $|\bigoplus\limits_{k\in S}M|=0=|M|^{|S|}$. To prove the rest of the assertion we proceed as follows.  
It can be seen that the map $\bigoplus\limits_{k\in S}M\rightarrow\bigcup
\limits_{A\in\Fin(S)}(M\setminus\{e\})^{A}$ which sends each $(x_k)\in\bigoplus\limits_{k\in S}M$ into the function $f:\{k\in S: x_{k}\neq e\}\rightarrow M\setminus\{e\}$ given by $f(k)=x_k$ is bijective. The sets $(M\setminus\{e\})^{A}$ with $A\in\Fin(S)$ are pairwise disjoint. Hence,  $|\bigoplus\limits_{k\in S}M|=
\sum\limits_{A\in\Fin(S)}(|M|-1)^{|A|}$. Now if $M$ is infinite then using Remark \ref{Remark 3-III}, we have $(|M|-1)^{|A|}=|M|$ for all $A\in\Fin(S)$. Then by using Lemma \ref{Lemma 1}, we will have $|\bigoplus\limits_{k\in S}M|=\sum\limits_{A\in\Fin(S)}|M|=|M|\times|\Fin(S)|
=|M|\times|S|$ (note that if $S$ is finite then $|M|\times|\Fin(S)|=|M|=|M|\times|S|$, but if $S$ is infinite then by Lemma \ref{Lemma 4}, $|\Fin(S)|=|S|$). Finally, assume $M$ is finite with $|M|\geqslant2$. We know that $S$ is infinite. Thus $1\leqslant(|M|-1)^{|A|}<\aleph_{0}\leqslant|S|=
|\Fin(S)|$ for all $A\in\Fin(S)$.
Then by Lemmas \ref{Lemma 4} and \ref{Lemma 2}, $|\bigoplus\limits_{k\in S}M|=\sum\limits_{A\in\Fin(S)}(|M|-1)^{|A|}=|\Fin(S)|=
|S|=|M|\times|S|$.   
\end{proof}

\begin{theorem}\label{Theorem 1} For any nonzero ring $R$ and any monoid $M$, the cardinality of the monoid-ring $R[M]$ is $|R|^{|M|}$ if $M$ is finite otherwise equal to $|R|\times|M|$. 
\end{theorem}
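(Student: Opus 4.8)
The plan is to strip away the multiplicative structure and read off the cardinality of $R[M]$ from Lemma \ref{Lemma 3}. Recall that, by definition, the underlying set of the monoid-ring $R[M]$ is the set of all functions $f\colon M\to R$ with finite support (addition is pointwise, multiplication is the convolution induced by the monoid operation of $M$). The first step is therefore to record the purely set-theoretic identity that this underlying set is exactly $\bigoplus\limits_{m\in M}R$, taking the distinguished element to be $e=0\in R$. This is the only place where the ring structure enters, and it enters trivially: whatever the convolution product does, it does not affect how many elements $R[M]$ has.

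Next I would apply Lemma \ref{Lemma 3} with the set denoted ``$M$'' there instantiated as $R$, and the index set ``$S$'' there instantiated as the monoid $M$. Since $R$ is a nonzero ring we have $0\neq 1$ in $R$, hence $|R|\geqslant 2$; in particular the degenerate clause $|M|\leqslant 1$ of Lemma \ref{Lemma 3} never applies in our situation. Lemma \ref{Lemma 3} then gives the conclusion directly: if $M$ is finite, $|R[M]|=\bigl|\bigoplus\limits_{m\in M}R\bigr|=|R|^{|M|}$; and if $M$ is infinite, then because $|R|\geqslant 2>1$ we are in the ``otherwise'' branch and obtain $|R[M]|=\bigl|\bigoplus\limits_{m\in M}R\bigr|=|R|\times|M|$.

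I do not expect any real obstacle here; the argument is a three- or four-line reduction once the identification of the underlying set with $\bigoplus\limits_{m\in M}R$ is made explicit. The only points deserving a word of care are that a monoid is nonempty, so $|M|\geqslant 1$ and the formula $|R|^{|M|}$ specializes correctly when $M$ is trivial, and that the hypothesis ``$R$ nonzero'' is used precisely to exclude the $|R|\leqslant 1$ case of Lemma \ref{Lemma 3}.
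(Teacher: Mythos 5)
Your proposal is correct and is essentially identical to the paper's own proof, which simply observes that $R[M]=\bigoplus\limits_{x\in M}R$ as sets and then invokes Lemma \ref{Lemma 3}. You have merely spelled out the instantiation of the lemma and the role of the hypothesis $R\neq 0$ (which rules out the degenerate clause $|R|\leqslant 1$) more explicitly than the paper does.
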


\begin{proof} We know that $R[M]=\bigoplus\limits_{x\in M}R$. Then apply Lemma \ref{Lemma 3}.
\end{proof} 

\begin{corollary} If $R$ is a nonzero ring and $S$ is an index set with $|S|\geqslant1$, then the cardinality of the polynomial ring $R[x_k: k\in S]$  is  $\aleph_{0}\times|R|\times|S|$.
\end{corollary}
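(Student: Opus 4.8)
The plan is to reduce the statement about polynomial rings directly to Theorem \ref{Theorem 1}, since a polynomial ring $R[x_k : k \in S]$ is nothing but the monoid-ring $R[M]$ where $M$ is the free commutative monoid on the index set $S$. First I would identify $M$ with $\bigoplus_{k \in S} \mathbb{N}$, the set of functions $S \to \mathbb{N} = \{0,1,2,\ldots\}$ with finite support (the exponent vectors of monomials), with monoid operation given by pointwise addition and identity the zero function. Since $|S| \geqslant 1$ and $\mathbb{N}$ is infinite with $|\mathbb{N}| = \aleph_0 \geqslant 2$, Lemma \ref{Lemma 3} applies: if $S$ is finite then $|M| = |\mathbb{N}|^{|S|} = \aleph_0^{|S|} = \aleph_0$ (a finite power of $\aleph_0$, which is $\aleph_0$ by the idempotency of infinite cardinals, i.e. Remark \ref{Remark 3-III}), and if $S$ is infinite then $|M| = \aleph_0 \times |S| = |S|$ by Remark \ref{Remark 3-III}(ii). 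In either case $|M| = \aleph_0 \times |S|$, and in particular $M$ is infinite.

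Next, since $M$ is infinite, Theorem \ref{Theorem 1} gives $|R[M]| = |R| \times |M| = |R| \times \aleph_0 \times |S|$, which is exactly the claimed value $\aleph_0 \times |R| \times |S|$ after rearranging the (commutative, associative) cardinal product. This completes the argument; the only point requiring a line of care is the computation of $|M|$ itself, splitting into the cases $S$ finite and $S$ infinite, but both cases collapse to $\aleph_0 \times |S|$ once one invokes the idempotency results already recorded in Remark \ref{Remark 3-III} and Lemma \ref{Lemma 3}.

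I expect no real obstacle here: the entire content is packaged in Lemma \ref{Lemma 3} and Theorem \ref{Theorem 1}, and the corollary is a matter of recognizing the free commutative monoid as the relevant $M$ and then doing elementary cardinal arithmetic. The one thing to be slightly careful about is not to misstate the finite-$S$ case: even when $S$ is a finite nonempty set the answer is still $\aleph_0 \times |R| \times |S| = \aleph_0 \times |R|$ (since $R[x_1,\ldots,x_n]$ is countably infinite when $R$ is finite, and of cardinality $|R|$ when $R$ is infinite), and this is consistent with the formula because $|M| = \aleph_0$ is infinite, so the ``$M$ finite'' branch of Theorem \ref{Theorem 1} never triggers. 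I would state this reduction explicitly so the reader sees why the uniform formula holds without a case split in the final answer.
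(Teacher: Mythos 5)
Your proposal is correct and follows exactly the paper's own route: recognize $R[x_k:k\in S]$ as the monoid-ring $R[M]$ with $M=\bigoplus_{k\in S}\omega$, compute $|M|=\aleph_0\times|S|$ via Lemma \ref{Lemma 3}, and conclude with Theorem \ref{Theorem 1}. Your explicit case split on $S$ finite versus infinite when evaluating $|M|$ is a small extra care that the paper's one-line invocation of Lemma \ref{Lemma 3} leaves implicit, but it is the same argument.
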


\begin{proof} We know that $R[x_k: k\in S]$ is the monoid-ring $R[M]$ where $M=\bigoplus\limits_{k\in S}\omega$ and $\omega=\{0,1,2,\ldots\}$ is the additive monoid of natural numbers. Then by Lemma \ref{Lemma 3}, we have $|M|=\aleph_{0}\times|S|$.
Since $M$ is infinite and $R$ is nonzero, then by Theorem \ref{Theorem 1}, the cardinality of $R[M]$ is $|R|\times|M|=\aleph_{0}\times|R|\times|S|$.  
\end{proof}

In the rest of the article, if not stated, the rings involved are assumed to be commutative. 
 
\begin{lemma}\label{Lemma 5} If $R$ is an infinite ring and $S$ is a multiplicative set of non-zero-divisors of $R$, then $|R|=|S^{-1}R|$.
\end{lemma}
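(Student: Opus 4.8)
The plan is to squeeze $|S^{-1}R|$ between $|R|$ and $|R|$ using two elementary maps, and then invoke the idempotency of infinite cardinals together with the Cantor--Schr\"oder--Bernstein theorem. For the lower bound, I would first observe that the canonical ring map $\iota:R\rightarrow S^{-1}R$, $r\mapsto r/1$, is injective: if $r/1=0$ in $S^{-1}R$ then $tr=0$ for some $t\in S$, and since every element of $S$ is a non-zero-divisor this forces $r=0$. Hence $|R|\leqslant|S^{-1}R|$.

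For the upper bound, I would use the surjection $R\times S\rightarrow S^{-1}R$ given by $(r,s)\mapsto r/s$, which is surjective by the very definition of the localization. Therefore $|S^{-1}R|\leqslant|R\times S|=|R|\times|S|$. Now $1\in S$ (a multiplicative set contains the identity) so $|S|\neq0$, and $S\subseteq R$ gives $|S|\leqslant|R|$; since $R$ is infinite, Remark \ref{Remark 3-III}(ii) yields $|R|\times|S|=\max\{|R|,|S|\}=|R|$. Combining the two bounds, $|R|\leqslant|S^{-1}R|\leqslant|R|$, and the Cantor--Schr\"oder--Bernstein theorem gives $|R|=|S^{-1}R|$.

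I do not anticipate a genuine obstacle here; the only point requiring a little care is the injectivity of $\iota$, which is exactly where the non-zero-divisor hypothesis on $S$ is used, and it is also this hypothesis that guarantees $S^{-1}R$ is a bona fide extension of $R$ rather than a quotient. (Alternatively, one can phrase the lower bound by noting that $R$ embeds in $S^{-1}R$ as a subring, but writing out the injectivity of $\iota$ is cleaner and keeps the argument self-contained.) The rest is just bookkeeping with cardinal arithmetic already set up in Remark \ref{Remark 3-III} and Lemma \ref{Lemma 1}.
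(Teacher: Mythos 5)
Your proposal is correct and follows essentially the same route as the paper: the injective canonical map $R\rightarrow S^{-1}R$ for the lower bound, the surjection $R\times S\rightarrow S^{-1}R$ for the upper bound, and the idempotency of infinite cardinals to conclude $|R|\cdot|S|\leqslant|R|^{2}=|R|$. You merely spell out the injectivity of $r\mapsto r/1$ in more detail than the paper does, which is fine.
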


\begin{proof} The canonical map $R\rightarrow S^{-1}R$ given by $r\mapsto r/1$ is injective and hence $|R|\leqslant|S^{-1}R|$. The map $R\times S\rightarrow S^{-1}R$ given by $(r,s)\mapsto r/s$ is surjective. Then using the idempotency of infinite cardinals, we have
$|S^{-1}R|\leqslant|R\times S|=|R|\cdot|S|\leqslant|R|^{2}=|R|$. Hence, $|R|=|S^{-1}R|$.  
\end{proof}

\begin{lemma}\label{Lemma 6} Every zero-dimensional ring is canonically isomorphic to its total ring of fractions.  
\end{lemma}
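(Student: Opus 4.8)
The plan is to show that for a zero-dimensional ring $R$, every non-zero-divisor is already a unit; this immediately forces the canonical map $R \to T(R)$ to be an isomorphism. Let $S$ denote the set of non-zero-divisors of $R$, so $T(R) = S^{-1}R$. The canonical map $R \to T(R)$ is always injective (since elements of $S$ are non-zero-divisors), so the only thing to prove is surjectivity, and for that it suffices to show that each $s \in S$ is invertible in $R$. First I would fix such an $s$ and consider the descending chain of ideals $(s) \supseteq (s^2) \supseteq (s^3) \supseteq \cdots$. The key input is that in a zero-dimensional ring this phenomenon is controlled: I would argue that $(s^n) = (s^{n+1})$ for some $n$, so that $s^n = a s^{n+1}$ for some $a \in R$, whence $s^n(1 - as) = 0$; since $s$ is a non-zero-divisor, so is $s^n$, and therefore $1 = as$, i.e. $s$ is a unit.

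The one point that needs justification is why the chain $(s^n)$ stabilizes, since a zero-dimensional ring need not be Noetherian. Here I would pass to a minimal prime. For any minimal prime $\mathfrak{p}$ of $R$, the localization $R_{\mathfrak{p}}$ is a zero-dimensional local ring, hence its maximal ideal $\mathfrak{p}R_{\mathfrak{p}}$ is its nilradical, so every element of $\mathfrak{p}R_{\mathfrak{p}}$ is nilpotent. Now suppose toward a contradiction that $s$ is not a unit in $R$; then $s$ lies in some maximal ideal, but more to the point I want to produce a prime containing $s$ that is also minimal, or argue directly. A cleaner route: if $s$ is a non-zero-divisor, then $s$ avoids every associated prime of $R$, and in a zero-dimensional ring the minimal primes are the only primes, so $s$ avoids every minimal prime; thus the image of $s$ in each $R_{\mathfrak{p}}$ (for $\mathfrak{p}$ minimal) is a unit. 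Since the minimal primes are all of $\Spec R$, the element $s$ is a unit locally everywhere, hence a unit in $R$.

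The main obstacle, then, is being careful about whether "non-zero-divisor" genuinely implies "avoids every minimal prime" without Noetherian hypotheses: one uses that the set of zero-divisors is the union of the associated primes only in the Noetherian case, so I would instead invoke the general fact that every minimal prime consists entirely of zero-divisors (a prime minimal over $(0)$ is contained in the union of associated primes / consists of zero-divisors by a standard localization argument — $R_{\mathfrak{p}}$ has a unique prime, which is nil, so every element of $\mathfrak{p}$ maps to a nilpotent and hence a zero-divisor, and one pulls this back). Granting that, $s \in S$ meets no minimal prime, so $s$ is contained in no prime at all, hence $s$ is a unit, and $R \to S^{-1}R$ is surjective. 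Combined with injectivity this gives the canonical isomorphism $R \cong T(R)$, and the naturality is built into the canonical map itself, completing the proof.
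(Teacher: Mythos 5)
Your proposal is correct, and the route you actually complete --- every minimal prime consists of zero-divisors (via the localization argument at $\mathfrak{p}$), all primes are minimal in dimension zero, so a non-zero-divisor lies in no prime and is therefore a unit --- is exactly the paper's proof, which phrases the same argument as the identity $Z(R)=\bigcup_{\mathfrak{p}\in\Spec(R)}\mathfrak{p}$. Your opening idea via stabilization of the chain $(s)\supseteq(s^{2})\supseteq\cdots$ is also salvageable (it is the standard characterization of zero-dimensionality: for each $s$ there exist $n$ and $a$ with $s^{n}=as^{n+1}$), but you correctly recognize it needs separate justification and wisely fall back on the localization argument.
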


\begin{proof} Let $R$ be a zero-dimensional ring. To prove the assertion, it suffices to show that 
$Z(R)=\bigcup\limits_{\mathfrak{p}
\in\Spec(R)}\mathfrak{p}$ where $Z(R)$ is the set of zero-divisors of $R$. For any ring $R$ we have $Z(R)\subseteq\bigcup\limits_{\mathfrak{p}
\in\Spec(R)}\mathfrak{p}$. It is also
well known that for any ring $R$ then $\bigcup\limits_{\mathfrak{p}\in\Min(R)}
\mathfrak{p}\subseteq Z(R)$ where $\Min(R)$ is the set of minimal prime ideals of $R$. By the hypothesis, $R$ is zero-dimensional and hence $\Min(R)=\Spec(R)$.
\end{proof}

\begin{theorem} Every commutative ring and its total ring of fractions have the same cardinality. 
\end{theorem}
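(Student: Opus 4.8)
The plan is to set $S$ equal to the set of all non-zero-divisors of $R$, so that $T(R)=S^{-1}R$ by definition, and then to argue by cases according to whether $R$ is finite or infinite. First I would record the elementary fact that $S$ is a multiplicative subset of $R$: it contains $1$, and if $s,t$ are non-zero-divisors then $st$ is again a non-zero-divisor, since $stx=0$ forces $tx=0$ forces $x=0$. This is exactly the hypothesis needed to feed $S$ into Lemma \ref{Lemma 5}.

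In the infinite case the work is already done: if $R$ is infinite, Lemma \ref{Lemma 5} applied to the multiplicative set $S$ of non-zero-divisors gives immediately $|R|=|S^{-1}R|=|T(R)|$.

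In the finite case I would show that the canonical map $R\to S^{-1}R$ is actually an isomorphism, which of course gives $|R|=|T(R)|$ (this also covers the zero ring). The point is that in a finite ring every non-zero-divisor is a unit: for $s\in S$ the $R$-linear map $R\to R$, $r\mapsto sr$, is injective by definition of $S$, hence bijective because $R$ is finite, so $1$ lies in its image and $s$ is invertible. Thus $S\subseteq R^{\times}$, so localizing at $S$ changes nothing, and $R\to S^{-1}R$ is bijective (injectivity coming from the fact that elements of $S$ are non-zero-divisors).

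There is no genuine obstacle here; the only point requiring the slightest care is that Lemma \ref{Lemma 5} is stated only for infinite $R$, which is why the finite case must be disposed of by the separate (and trivial) unit argument rather than by the counting bound $|S^{-1}R|\le|R\times S|$, which is useless when $R$ is finite.
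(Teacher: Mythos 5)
Your proof is correct and follows the same overall route as the paper: the same case split on whether $R$ is finite or infinite, with the infinite case handled by applying Lemma \ref{Lemma 5} to the multiplicative set of non-zero-divisors. The only divergence is in the finite case, where the paper invokes Lemma \ref{Lemma 6} (finite rings are zero-dimensional, and zero-dimensional rings are balanced), whereas you prove directly that every non-zero-divisor of a finite ring is a unit by the pigeonhole argument on the injective multiplication map $r\mapsto sr$; your version is more elementary and self-contained, while the paper's reuses machinery it needs anyway for the later discussion of balanced rings. Both establish the stronger fact that $R\rightarrow T(R)$ is an isomorphism when $R$ is finite, so there is no gap.
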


\begin{proof} Let $R$ be a ring. If $R$ is infinite the assertion follows from Lemma \ref{Lemma 5}. But if $R$ is a finite ring then it is zero-dimensional and hence the assertion follows from Lemma \ref{Lemma 6}.
\end{proof}

The above result shows that every commutative ring is equinumerous (in bijection) with its total ring of fractions. This set-theoretic observation leads us to the following ring theory concept: 

\begin{definition} A commutative ring $R$ is called a \emph{balanced ring} if the canonical ring map $R\rightarrow T(R)$ is an isomorphism. 
\end{definition} 

For any ring $R$ we have $Z(R)\subseteq\bigcup\limits_{M\in\Max(R)}M$ where $\Max(R)$ is the set of maximal ideals of $R$. It can be seen that a ring $R$ is a balanced ring if and only if $Z(R)=\bigcup\limits_{M\in\Max(R)}M$, or equivalently, every non-zero-divisor element of $R$ is invertible in $R$. In Lemma \ref{Lemma 6} we observed that every zero-dimensional ring is a balanced ring. It can be easily seen that the total ring of fractions of any ring is a balanced ring. Every direct product of balanced rings is also a balanced ring. Next, we investigate further classes of rings that are balanced rings. 

\begin{lemma}\label{Theorem 2} If for a ring $R$ we have  $\Hom_{R}(R/M,R)\neq0$ for all $M\in \Max(R)$, then $R$ is a balanced ring. 
\end{lemma}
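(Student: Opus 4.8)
The plan is to show that every non-zero-divisor $r \in R$ is invertible, which by the characterization stated just above is equivalent to $R$ being a balanced ring. Suppose toward a contradiction that $r$ is a non-zero-divisor but not a unit. Then the principal ideal $(r)$ is a proper ideal, so it is contained in some maximal ideal $M \in \Max(R)$. The hypothesis gives a nonzero $R$-module homomorphism $\phi \colon R/M \to R$. Since $R/M$ is generated by the image of $1$, the map $\phi$ is determined by the element $a = \phi(1+M) \in R$, and $a \neq 0$ because $\phi \neq 0$. The defining relations of $R/M$ force $M a = 0$, i.e. $a$ is annihilated by every element of $M$.

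The key step is then to derive a contradiction from the existence of such an $a$. Since $r \in M$, we get $ra = 0$ with $a \neq 0$; but $r$ was assumed to be a non-zero-divisor, so $ra = 0$ forces $a = 0$, contradiction. Hence no such $r$ exists: every non-zero-divisor is a unit, and therefore the canonical map $R \to T(R)$ is an isomorphism, so $R$ is a balanced ring.

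The argument is short and the only place requiring care is the translation between the hom-module $\Hom_R(R/M, R)$ and annihilators: one must check that a nonzero homomorphism $R/M \to R$ produces a genuinely nonzero element of $R$ whose annihilator contains $M$. This is routine — it is just the standard adjunction identifying $\Hom_R(R/M, R)$ with $\{a \in R : Ma = 0\} = \Ann_R(M)$ — but it is the conceptual crux, since once we have a nonzero element killed by all of $M$ (in particular killed by our non-zero-divisor $r \in M$), the contradiction is immediate. I would also remark in passing that this lemma is what feeds the later results: for Noetherian $R$, the condition $\Hom_R(R/M, R) \neq 0$ at every maximal ideal is, after localizing, exactly the statement that $R_M$ has depth zero, and for self-injective rings the nonvanishing of $\Hom_R(R/M, R)$ is automatic whenever $R/M$ embeds suitably.
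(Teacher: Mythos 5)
Your proof is correct and is essentially the paper's argument in contrapositive form: both rest on the observation that a nonzero homomorphism $R/M\rightarrow R$ yields a nonzero element $a$ with $Ma=0$, the paper using it to conclude directly that $M\subseteq Z(R)$ for every maximal ideal $M$, and you using it to rule out a non-unit non-zero-divisor. The two formulations are interchangeable via the characterization $Z(R)=\bigcup_{M\in\Max(R)}M$ stated just before the lemma.
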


\begin{proof} It suffices to show that $M\subseteq Z(R)$. Take $a\in M$. By the hypothesis, we have a nonzero morphism of $R$-modules $f:R/M\rightarrow R$. Thus $b:=f(1+M)\neq0$ and $ab=0$. Hence, $a\in Z(R)$. 
\end{proof}

\begin{theorem} For a Noetherian ring $R$ the following assertions are equivalent: \\
$\mathbf{(i)}$ $R$ is a balanced ring. \\
$\mathbf{(ii)}$ $\Hom_{R}(R/M,R)\neq0$ for all $M\in \Max(R)$. \\
$\mathbf{(iii)}$ $\depth(R_{M})=0$ for all $M\in\Max(R)$.
\end{theorem}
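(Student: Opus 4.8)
The plan is to show that each of (i), (ii), (iii) is equivalent to the single assertion that every maximal ideal of $R$ lies in $\Ass(R)$; this is precisely where the Noetherian hypothesis enters, through the finiteness of $\Ass(R)$, the equality $Z(R)=\bigcup_{\mathfrak{p}\in\Ass(R)}\mathfrak{p}$, prime avoidance, and the behaviour of associated primes under localization. The implication (ii) $\Rightarrow$ (i) is already Lemma \ref{Theorem 2} and needs nothing further, so it suffices to route the remaining equivalences through this auxiliary condition.

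First I would record that (ii) is equivalent to $\Max(R)\subseteq\Ass(R)$, and this step needs no Noetherian hypothesis: for $M\in\Max(R)$, a nonzero $f\in\Hom_{R}(R/M,R)$ corresponds to the element $b:=f(1+M)\neq0$ with $M\subseteq\Ann(b)\subsetneq R$, so $\Ann(b)=M$ by maximality; conversely, $\Ann(b)=M$ with $b\neq0$ yields the nonzero map $r+M\mapsto rb$. Hence $\Hom_{R}(R/M,R)\neq0$ exactly when $M\in\Ass(R)$. Next, (i) is equivalent to $\Max(R)\subseteq\Ass(R)$ as well: by the characterisation recalled just before Lemma \ref{Theorem 2}, $R$ is balanced iff $M\subseteq Z(R)$ for every $M\in\Max(R)$; since $R$ is Noetherian, $Z(R)=\bigcup_{\mathfrak{p}\in\Ass(R)}\mathfrak{p}$ is a finite union of primes, so by prime avoidance $M\subseteq\mathfrak{p}$ for some $\mathfrak{p}\in\Ass(R)$, and maximality of $M$ forces $M=\mathfrak{p}\in\Ass(R)$; the converse inclusion is immediate, as associated primes consist of zero-divisors.

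It remains to treat (iii). I would fix $M\in\Max(R)$ and pass to the Noetherian local ring $(R_{M},MR_{M})$. There $\depth(R_{M})=0$ iff $MR_{M}$ contains no non-zero-divisor of $R_{M}$, i.e.\ $MR_{M}\subseteq Z(R_{M})=\bigcup_{\mathfrak{q}\in\Ass(R_{M})}\mathfrak{q}$, which by prime avoidance and maximality of $MR_{M}$ means $MR_{M}\in\Ass(R_{M})$ (the converse being clear). On the other hand, the standard description of associated primes under localization gives $\Ass(R_{M})=\{\mathfrak{p}R_{M}:\mathfrak{p}\in\Ass(R),\ \mathfrak{p}\subseteq M\}$, so $MR_{M}\in\Ass(R_{M})$ iff $M\in\Ass(R)$. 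Hence $\depth(R_{M})=0$ iff $M\in\Ass(R)$, and (iii) holds iff $\Max(R)\subseteq\Ass(R)$. Combining the three equivalences with Lemma \ref{Theorem 2} closes the cycle (i) $\Leftrightarrow$ (ii) $\Leftrightarrow$ (iii).

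I do not expect a genuine obstacle: the argument is an assembly of standard facts about Noetherian rings, and the only points requiring care are the two invocations of prime avoidance (legitimate precisely because $\Ass(R)$ and $\Ass(R_{M})$ are finite) and the localization formula for $\Ass$. If one prefers to avoid the latter, (ii) $\Leftrightarrow$ (iii) can be obtained directly from the isomorphism $\Hom_{R}(R/M,R)_{M}\cong\Hom_{R_{M}}(R_{M}/MR_{M},R_{M})$ (localization commutes with $\Hom$ out of the finitely presented module $R/M$), together with the observation that $\Hom_{R}(R/M,R)$, being an $R/M$-module, is unchanged by localization at $M$, so that $\Hom_{R}(R/M,R)\neq0$ iff $\Hom_{R_{M}}(R_{M}/MR_{M},R_{M})\neq0$ iff $\depth(R_{M})=0$.
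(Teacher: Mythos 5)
Your proposal is correct and uses essentially the same ingredients as the paper's proof: the Noetherian facts $Z(R)=\bigcup_{\mathfrak{p}\in\Ass(R)}\mathfrak{p}$ with $\Ass(R)$ finite, prime avoidance, the correspondence between $M\in\Ass(R)$ and nonzero maps $R/M\to R$, and the compatibility of $\Hom$ and $\Ass$ with localization. The only difference is organizational -- you route all three conditions through the single hub $\Max(R)\subseteq\Ass(R)$ rather than proving a cycle of implications -- and your fallback argument for (ii) $\Leftrightarrow$ (iii) via $\Hom_{R}(R/M,R)\otimes_{R}R_{M}\cong\Hom_{R_{M}}(R_{M}/MR_{M},R_{M})$ is exactly the paper's step (iii) $\Rightarrow$ (ii).
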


\begin{proof} (i)$\Rightarrow$(ii): By hypothesis, $Z(R)=\bigcup\limits_{M\in\Max(R)}M$. Since $R$ is Noetherian, the set of associated primes of $R$ is a finite set and $Z(R)=\bigcup
\limits_{\mathfrak{p}\in\Ass(R)}\mathfrak{p}$. Then by the Prime Avoidance Lemma, $M\in \Ass(R)$. Thus $M=\Ann(x)$ for some nonzero $x\in R$. Then the map $R/M\rightarrow R$ given by $r+M\mapsto rx$ is a nonzero morphism of $R$-modules. \\
(ii)$\Rightarrow$(i): See Lemma \ref{Theorem 2}. \\
(i)$\Rightarrow$(iii): We observed that $M\in\Ass(R)$ for all $M\in \Max(R)$. Thus $MR_{M}\in\Ass(R_M)$. This shows that $\Hom_{R_{M}}(R_{M}/MR_{M},R_M)\neq0$. We know that a local ring $(S,\mathfrak{m})$ has zero depth if and only if $\Hom_{S}(S/\mathfrak{m},S)\neq0$. Thus $\depth(R_{M})=0$. \\
(iii)$\Rightarrow$(ii): We know that every finitely generated module over a Noetherian ring is finitely presented. 
It is also well known that for any modules $N$ and $N'$ over a ring $R$ with $N$ finitely presented and for any flat $R$-algebra $S$, we have the canonical isomorphism of $S$-modules $\Hom_R(N,N')\otimes_{R}S\simeq\Hom_{S}(N\otimes_{R}S,
N'\otimes_{R}S)$. Therefore, in our case we have $\Hom_R(R/M,R)\otimes_{R}R_M\simeq\Hom_{R_{M}}
(R_{M}/MR_{M},R_M)\neq0$. It follows that $\Hom_R(R/M,R)\neq0$. 
\end{proof}

\begin{theorem} Every self-injective ring is a balanced ring.
\end{theorem}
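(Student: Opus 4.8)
The plan is to use the characterization established just above: a ring $R$ is balanced if and only if every non-zero-divisor of $R$ is a unit, equivalently $Z(R)=\bigcup_{M\in\Max(R)}M$. By Lemma \ref{Theorem 2} it suffices to show that $\Hom_{R}(R/M,R)\neq 0$ for every maximal ideal $M$ of $R$. So fix $M\in\Max(R)$ and let $a\in M$, $a\neq 0$ (if $M=0$ the ring is a field and there is nothing to prove); I want to produce a nonzero $b\in R$ with $ab=0$, i.e. show $M\subseteq Z(R)$.

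The key step is to exploit self-injectivity via Baer's criterion in the form that says $R$ is injective as an $R$-module. First I would consider the multiplication map $\mu_a: R\to R$, $r\mapsto ar$, whose image is the ideal $(a)$. The idea is to build an $R$-linear map $(a)\to R$ that does not extend to all of $R$ unless $\Ann(a)\neq 0$: concretely, look at the map $\varphi:(a)\to R/M$ obtained by $ar\mapsto \bar r$ — but this is only well-defined if $\Ann(a)\subseteq M$, which need not hold. A cleaner route is to argue by contradiction: suppose $a$ is a non-zero-divisor. Then $\mu_a: R\to R$ is injective, so $R\cong (a)$ as $R$-modules via $1\mapsto a$. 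Now consider the inclusion $(a)\hookrightarrow R$ together with the composite $R\xrightarrow{\;\cong\;}(a)\hookrightarrow R$; since $R$ is self-injective, I would like to contradict the properness of $(a)$ inside $R$. The sharp statement to invoke is: in a self-injective ring, every injective endomorphism (indeed every non-zero-divisor acting by multiplication) is an automorphism, hence every non-zero-divisor is a unit. This follows because if $\mu_a$ is injective, then $R$ embeds as the submodule $(a)$ of the injective module $R$, the inclusion $(a)\hookrightarrow R$ splits, so $(a)$ is a direct summand of $R$; being generated by the single element $a$ and complemented, one gets $(a)=R$ (the complement is killed by $a$, but a complement of $(a)\cong R$ in $R$ that meets no torsion — more carefully, $R=(a)\oplus C$ forces $C\Ann(a)$-torsion, and since $a$ is a non-zero-divisor $\Ann(a)=0$ so $C$ is both a summand and annihilated trivially; counting idempotents, $1=e+f$ with $(a)=Re$, $C=Rf$, and $a$ a non-zero-divisor forces $f=0$). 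Hence $a$ is a unit, so $a\notin M$, contradiction.

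Thus $M\subseteq Z(R)$ for every $M\in\Max(R)$, which is exactly the hypothesis of Lemma \ref{Theorem 2}, and therefore $R$ is a balanced ring.

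The main obstacle I expect is making the "injective endomorphism is an automorphism" step fully rigorous in the non-Noetherian generality implicit here: one must be careful that self-injectivity gives splitting of the inclusion $(a)\hookrightarrow R$ (which it does, since $R$ is injective and $(a)\hookrightarrow R$ is a monomorphism of $R$-modules, so the identity on $(a)$ extends along this inclusion, yielding a retraction), and then that a principal direct summand $(a)=Re$ of $R$ with $a$ a non-zero-divisor must equal $R$. The cleanest formulation is: write $1=e+f$ with orthogonal idempotents, $(a)=Re$; since $a\in Re$ we have $af=0$, and $a$ being a non-zero-divisor gives $f=0$, so $e=1$ and $(a)=R$. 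This is short once set up correctly, but it is the place where one genuinely uses the module-theoretic content of self-injectivity rather than just Lemma \ref{Theorem 2}.
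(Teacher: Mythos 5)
Your argument is correct, and at its core it uses self-injectivity exactly where the paper does: for a non-zero-divisor $a$, the multiplication map $\mu_a\colon R\to R$ is an injective $R$-module map into the injective module $R$, and this forces $a$ to be a unit. The two proofs differ only in how they extract the unit. The paper extends the identity of $R$ along $\mu_a$ to get $g\colon R\to R$ with $g\circ\mu_a=\Identity$, whence $a\,g(1)=g(a)=1$ --- one line, with the inverse exhibited explicitly. You instead note that $(a)\cong R$ is an injective submodule of $R$, hence a direct summand $R=(a)\oplus C$, and kill the complement: since $C$ is an ideal, $aC\subseteq (a)\cap C=0$, so $C=0$ because $a$ is a non-zero-divisor (your idempotent computation $1=e+f$, $af=0$, $f=0$ is a correct variant). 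This is the splitting-lemma reformulation of the same extension property; it is sound in full generality, so the ``non-Noetherian obstacle'' you worry about is not an issue --- injective submodules are always direct summands. Two small inaccuracies in your framing: the conclusion ``$M\subseteq Z(R)$ for every maximal $M$'' is \emph{not} the hypothesis of Lemma \ref{Theorem 2} (that hypothesis is $\Hom_{R}(R/M,R)\neq0$); what you actually invoke is the characterization, stated just after the definition of balanced ring, that $R$ is balanced if and only if every non-zero-divisor is invertible --- and once you phrase it that way, the entire detour through maximal ideals is unnecessary, as the paper's proof shows.
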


\begin{proof} Let $R$ be a self-injective ring (injective as a module over itself). To prove the assertion, it will be enough to show that every non-zero-divisor element $a\in R$ is invertible. 
The map $f:R\rightarrow R$ given by $r\mapsto ar$ is an injective morphism of $R$-modules. Then by hypothesis, there exists a morphism of $R$-modules $g:R\rightarrow R$ such that $gf$ is the identity map. It follows that $ag(1)=g(a)=g\big(f(1)\big)=1$.  
\end{proof}


\begin{thebibliography}{10}
\bibitem{Enderton}
H.B. Enderton, Elements of set theory, Academic Press, Inc. (1977)
\bibitem{Math Garden}
M. Garden, Cardinal Arithmetic, https://math-garden.com, Johannes Ueberberg (2021)
\bibitem{Gilmer-Heinzer}
R. Gilmer and W. Heinzer, On the cardinality of subrings of a commutative ring, Canad. Math. Bull. 29(1) 102-108 (1986) 
\bibitem{Scott}
W.R. Scott, Groups and cardinal numbers, Amer. J. Math. 74(1) (1952) 187-197
\bibitem{Shelah}
S. Shelah, On a problem of Kurosh, Jônsson groups, and applications, Word Problems II, North-Holland Publishing Company (1980) 373-394 
\bibitem{Szelpal}
I. Szélpál, Die unendlichen Abelschen Gruppen mit lauter endlichen echten Untergruppen, Publicationes Mathematicae (Debrecen), vol. 1 (1949) 63-64
\bibitem{Tarski}
A. Tarski, Sur quelques théorèmes qui équivalent à l'axiome du choix, Fund. Math. 5(1) 147-154 (1924)
\end{thebibliography}
\end{document}